\newcommand{\rrvert}{\vert}
\newtheorem{theorem}{Theorem}[section]
\newtheorem{lemma}[theorem]{Lemma}
\newtheorem{prop}[theorem]{Proposition}
\newtheorem{corollary}[theorem]{Corollary}
\newcommand{\mod}{\ \mathrm{mod}\ }
\newcommand{\eqref}[1]{(\ref{#1})}
\begin{document}
\begin{frontmatter}

\title{Ergodicity of Poisson products and applications}
\runtitle{Ergodicity of Poisson products}

\begin{aug}
\author[A]{\fnms{Tom} \snm{Meyerovitch}\corref{}\ead[label=e1]{mtom@math.bgu.ac.il}}
\runauthor{T. Meyerovitch}
\affiliation{Ben-Gurion University of the Negev}
\address[A]{Department of Mathematics\\
Ben-Gurion University of the Negev\\
Be'er Sheva 84105\\
Israel\\
\printead{e1}} 
\end{aug}

\received{\smonth{7} \syear{2011}}
\revised{\smonth{11} \syear{2012}}

%
\begin{abstract}
In this paper we study the Poisson process over a $\sigma$-finite measure-space
equipped with a measure preserving transformation or a group of measure
preserving transformations. 
For a measure-preserving transformation $T$ acting on a $\sigma$-finite
measure-space $X$, the \emph{Poisson suspension} of $T$ is the
associated probability preserving transformation $T_*$ which acts on
realization of the Poisson process over $X$. We prove ergodicity of the
\emph{Poisson-product} $T \times T_*$ under the assumption that $T$ is
ergodic and conservative.
We then show, assuming ergodicity of $T \times T_*$, that it is
impossible to deterministically perform natural equivariant operations:
\emph{thinning}, \emph{allocation} or \emph{matching}. In contrast,
there are well-known results in the literature demonstrating the
existence of \emph{isometry} equivariant thinning, matching and
allocation of homogenous Poisson processes on $\mathbb{R}^d$.
We also prove ergodicity of the ``first return of left-most
transformation'' associated with a measure preserving transformation on
$\mathbb{R}_+$, and discuss ergodicity of the Poisson-product of
measure preserving group actions, and related spectral properties.

\end{abstract}

%
\begin{keyword}[class=AMS]
\kwd{60G55}
\kwd{37A05}
\end{keyword}
\begin{keyword}
\kwd{Poisson suspension}
\kwd{equivariant thinning}
\kwd{equivariant allocation}
\kwd{infinite measure preserving transformations}
\kwd{conservative transformations}
\end{keyword}

\end{frontmatter}

\section{Introduction}\label{sec1}
It is straightforward that the distribution of a homogenous Poisson
point process on $\mathbb{R}^d$ is preserved
by isometries. In the literature, various \emph
{translation-equivariant} and \emph{isometry-equivariant}
operations on Poisson process have been considered:
\begin{itemize}
\item \textit{Poisson thinning}: A (deterministic) \emph
{Poisson-thinning} is a rule for selecting a
subset of the points in the Poisson process which are equal in
distribution to
a lower intensity homogenous Poisson process.
Ball~\cite{ballthinning} demonstrated a deterministic
Poisson-thinning on $\mathbb{R}$ which was \emph{translation
equivariant}---that is, if a translation is applied to the original process,
the new points selected are translations of the original ones by the
same vector.
This was extended and refined by Holroyd, Lyons and Soo \cite
{holroydlyonssoopoissonsplitting2011}
to show that for any $d \ge1$,
there is an \emph{isometry-equivariant} Poisson-thinning 
on $\mathbb{R}^d$.
\item \textit{Poisson allocation}:
Given a realization $\omega$
of a Poisson process on $\mathbb{R}^d$, a~\emph{Poisson allocation}
partitions $\mathbb{R}^d$ up to measure $0$
by assigning to each point in $\omega$ a \emph{cell} which is a
finite-measure subset of $\mathbb{R}^d$.
Hoffman, Holroyd and Peres~\cite{hoffmanholroydperesstableallocation}
constructed an isometry-equivariant allocation scheme for any
stationary point process of finite intensity.
The above allocation scheme had the characteristic property of being ``stable.''
Subsequent work demonstrated isometry-equivariant Poisson allocations
with other nice properties
such as connectedness of the allocated cells~\cite{krikunallocation}
or good stochastic bounds on the diameter of the cells \cite
{chatterjeepeledromikgravitationalallocation}.
\item \textit{Poisson matching}: A \emph{Poisson matching} is a
deterministic scheme which finds a perfect
matching of two identically distributed independent Poisson processes.
Different isometry-equivariant Poisson matching schemes have been constructed
\cite{holrodydmatching2011,holroydpemantleperesschrammpoissonmatching}.
\end{itemize}

Consider a transformation 
of $\mathbb{R}^d$ which preserves Lebesgue measure. Does there
exist a Poisson thinning which is equivariant with respect to the
given transformation? What about an equivariant Poisson allocation
or matching?

To have a couple of examples in mind, consider the following
transformations $T_{\mathrm{RW}},T_{\mathrm{Boole}}\dvtx \mathbb{R}\to\mathbb{R}$ of the
real line given by
%
\begin{equation}
T_{\mathrm{RW}}(x)= \lfloor x \rfloor+ (2x \mod1) -1 + 2\cdot1_{(0,
{1}/{2}]}(x
\mod1)
\end{equation}
and
%
\begin{equation}
T_{\mathrm{Boole}}(x)= x - \frac{1}{x}
\end{equation}
$T_{\mathrm{Boole}}$ is known as Boole's transformation. It is a
is a classical example of an ergodic transformation preserving Lebesgue measure.
See~\cite{adlerweiss73} for a proof of ergodicity and discussions of
this transformation.
You may notice that $T_{\mathrm{RW}}$ is isomorphic to the shift map on the
space of forward trajectories of the simple random walk
on~$\mathbb{Z}$.


From our perspective, it is natural (although mathematically
equivalent) to consider an abstract standard $\sigma$-finite measure
space $(X,\mathcal{B},\mu)$,
instead of $\mathbb{R}^d$ with Lebesgue measure. 
We consider a Poisson point process on this space,
which denoted by $(X^*,\mathcal{B}^*,\mu^*)$.
Any measure preserving transformation $T\dvtx X \to X$ naturally induces a map
$T_*\dvtx X^* \to X^*$ on the Poisson process.
This transformation $T_*$ is the \emph{Poisson suspension} of $T$ \cite
{roy2009}.


We prove the following theorem:

\begin{theorem}\label{thmnopoissonthin}
Let $T\dvtx X \to X$ be any conservative and ergodic measure preserving
transformation of $(X,\mathcal{B},\mu)$ with $\mu(X)=\infty$. There
does not exist a \mbox{$T$-equivariant} Poisson thinning, allocation or matching.
\end{theorem}

We prove Theorem~\ref{thmnopoissonthin} by studying ergodic
properties of the map
$T \times T_*$, which acts on the product space $(X \times X^*, \mathcal
{B}\times\mathcal{B}^*, \mu^* \times\mu)$.
We refer to this system as the \emph{Poisson-product}\vadjust{\goodbreak} associated with $T$.
The space $X\times X^*$ can be considered as a countable set of
``indistinguishable'' points in $X$, with a unique ``distinguished'' point.
The Poisson-product $T\times T_*$ acts on this by applying the same map
$T$ to each point, including the distinguished point.

Our main result about Poisson-products is the following theorem:

\begin{theorem}
\label{thmpoissonproductergodic} Let $(X,\mathcal{B},\mu,T)$ be a
conservative, measure-preserving transformation with
$\mu(X)=\infty$. Then the Poisson-product $T\times T_*$ is ergodic
if and only if $T$ is ergodic.
\end{theorem}

Before concluding the introduction and proceeding with the details, we
recall a couple of results regarding nonexistence of certain
equivariant operations on Poisson processes. Evans proved in
\cite{evens2010} that with respect to any noncompact group of
\emph{linear} transformations there is no invariant Poisson-thinning on
$\mathbb{R}^d$. Gurel-Gurevich and Peled proved the nonexistence of
translation equivariant \emph{Poisson thickening} on the real line \cite
{gurelpeled}, which means that there is no measurable function on
realizations of the a homogenous Poisson process that sends a Poisson
process to a higher intensity homogenous Poisson process.

This paper is organized as follows: In Section~\ref{secprelim} we
briefly provide some terminology and necessary
background. Section~\ref{secproofpoissonproductergodic}
contains a short proof of Theorem~\ref{thmpoissonproductergodic}
stated above, based on previous work in 
ergodic theory.
In Section~\ref{secthinnings}
we prove any $T$-equivariant thinning is trivial, assuming $T \times
T_*$ is ergodic.
In Section~\ref{secallocationmatching} we show that under the same assumptions
there are no $T$-equivariant Poisson allocations or Poisson matchings,
using an intermediate result about nonexistence of positive
equivariant maps into~$L^1$.
Section~\ref{secFROL} discusses the ``leftmost position
transformation'' and contains a proof of ergodicity,
yet another application of Theorem~\ref{thmpoissonproductergodic}.
Section~\ref{secgroupactions} is a discussion of ergodicity of
Poisson products for measure preserving group actions.



\section{Preliminaries}\label{secprelim}

In this section we briefly recall some definitions and background from
ergodic theory required for the rest of the paper. We also recall some
properties of the Poisson point process on a $\sigma$-finite measure
space.

\subsection{Ergodicity, conservative transformations and induced
transformations}
Throughout this paper
$(X,\mathcal{B},\mu)$ is a standard $\sigma$-finite measure space. We
will mostly be interested in the case where $\mu(X)=\infty$.
Also throughout the paper, $T\dvtx X \to X$ is a measure preserving transformation,
unless explicitly stated otherwise,
where $T$ denotes an action of a group by measure preserving
transformations of $(X,\mathcal{B},\mu)$.
The collection of measurable
sets of positive measure by will be denoted by $\mathcal{B}^+:= \{B
\in\mathcal{B} \dvtx   \mu(B)>0\}$.

Recall that $T$ is \emph{ergodic} if any set $A \in\mathcal{B}$ which
is $T$-invariant has either $\mu(A)=0$ or $\mu(A^c)=0$. Equivalently,\vadjust{\goodbreak}
$T$ is ergodic if any measurable function $f\dvtx X \to\mathbb{R}$
satisfying $f \circ T= f$ $\mu$-almost everywhere is constant on a set
of full measure.

A set $W \in\mathcal{B}$ is called a \emph{wandering set} if $\mu
(T^{-n}W \cap W)=0$ for all $n > 0$.
The transformation $T$ is called \emph{conservative} if there are no
wandering sets in $\mathcal{B}^+$.
The \emph{Poincar\'{e} recurrence theorem} asserts that any $T$ which
preserves a \emph{finite} measure is conservative.

For a conservative $T$ and $A \in\mathcal{B}^+$, the \emph{first
return time function} is
defined for $x \in A$ by $\varphi_A(x)= \min\{ n \ge1 \dvtx   T^{n}(x) \in
A\}$. $\varphi_A$ is finite $\mu$-a.e; this is a direct consequence of
$T$ being conservative.

The \emph{induced transformation} on $A$ is defined by $T_A(x):=
T^{\varphi_A(x)}(x)$.
If $T$ is conservative and ergodic and $A \in\mathcal{B}^+$, $T_A\dvtx A
\to A$ is a conservative,
ergodic transformation of $(A,\mathcal{B} \cap A,\mu\mid_A)$.

See~\cite{aarobook} for a comprehensive introduction to ergodic theory
of infinite measure preserving transformations.

\subsection{Cartesian product transformations}
Suppose $T$ is conservative, and $S\dvtx Y \to Y$ is a probability
preserving transformation
of $(Y,\mathcal{C},\nu)$, namely $\nu(Y)=1$.
It follows (as in Proposition 1.2.4 in~\cite{aarobook})
that the \emph{Cartesian product transformation} $T\times S\dvtx X\times Y
\to X\times Y$ is a conservative,
measure-preserving transforation of the Cartesian product measure-space
$(X\times Y,\mathcal{B} \otimes\mathcal{C},\mu\times\nu)$.

\subsection{\texorpdfstring{$L^\infty$-eigenvalues of measure preserving transformations}
{L infinity-eigenvalues of measure preserving transformations}}

A function $f \in L^{\infty}(X,\mathcal{B},\mu)$ is an
\emph{$L^\infty$-eigenfunction} of $T$ if $f\ne0$ and $Tf=\lambda
f$ for some $\lambda\in\mathbb{C}$. The corresponding $\lambda$ is
called an $L^\infty$-\emph{eigenvalue} of $T$. 
We briefly recall some well-known results:

If $T$ is ergodic and $f$ is an
$L^\infty$-eigenfunction, it follows that $|f|$ is constant
almost-everywhere. The $L^\infty$-eigenvalues of $T$ are
\[
e(T):= \bigl\{ \lambda\in\mathbb{C} \dvtx \exists f\in L^{\infty}(X,
\mathcal {B},\mu) f\ne0 \mbox{ and } Tf=\lambda f \bigr\}.
\]

If $T$ is conservative, then $|\lambda| = 1$ for any eigenvalue $\lambda
$, for otherwise
the set
%
\[
\bigl\{x \in X \dvtx \bigl|f(x) \bigr| \in\bigl(|\lambda|^{k},|\lambda|^{k+1}\bigr]
\bigr\}
\]
would be a
nontrivial wandering set for some $k \in\mathbb{Z}$ if
$|\lambda|>1$. Thus, for any conservative transformation $T$, $e(T)$ is
a subset if the unit sphere
\[
\mathbb{S}^{1}= \bigl\{ x \in\mathbb{C} \dvtx |x|=1\bigr\}.
\]

$e(T)$ is a group with respect to multiplication, and
carries a natural Polish topology, with respect to which the natural
embedding in
$\mathbb{S}^{1}$ is continuous.

When $T$ preserves a finite measure, $e(T)$ is at most countable.
For a general infinite-measure preserving $T$, however, $e(T)$
can be uncountable, and quite ``large,'' for instance, the arbitrary
Hausdorff dimension
$\alpha\in(0,1)$. Importantly for us, however, there are limitations
on how ``large''\vadjust{\goodbreak} $e(T)$ can be. For instance, $e(T)$ is a \emph{weak
Dirichlet} set. This means that
\[
\liminf_{n \to\infty}\int\bigl|1- \chi_n(s)\bigr|\,dp(s)=0
\]
whenever $p$ is a
probability measure on $\mathbb{S}^{1}$ with $p(e(T))=1$, and $\chi_n(s):=\exp (2\pi i n s )$.
In particular the set $e(T)$ has measure zero with respect to Haar
measure on $\mathbb{S}^1$.

We refer the reader to existing literature for further details
\cite
{aarobook,aaronadkarni1987,nadkarnispectraldsbook,schmidtspectra1982}.

\subsection{The $L^2$-spectrum}

Let 
$U_T\dvtx L^2(\mu)\to L^2(\mu)$ denote the unitary operator defined by
$U_T(f):=f \circ T$.


The \emph{spectral type} of a unitary operator $U$ on a Hilbert
space $H$, denoted $\sigma_U$, is a positive measure on $\mathbb{S}^1$
satisfying
\begin{longlist}[(a)]
\item[(a)]
\[
\bigl\langle U^nf,g \bigr\rangle = \int_{\mathbb{S}^1}
\chi_n(s)h(f,g) (s)\,d\sigma_U(s),
\]
where $h\dvtx H\times H \to
L^1(\sigma_U)$ is a sesquilinear map;
\item[(b)]$\sigma_U$ is minimal with that property, in the sense that
it satisfies $\sigma_U \ll\sigma$ for any measure $\sigma$ on $\mathbb{S}^1$
satisfying (a).
\end{longlist}

In (b) above and throughout the paper, we write $\mu_1 \ll\mu_2$ to
indicate that the measure $\mu_1$ is absolutely continuous with respect
to $\mu_2$. If $\mu_1 \ll\mu_2$ and $\mu_2 \ll\mu_1$, we say they are
in the same measure class.

The spectral type $\sigma_U$ is defined only up to measure class.
Existence of $\sigma_U$ is a formulation of \emph{the scalar spectral
theorem}. 

For a measure-preserving transformation $T$, The \emph{spectral
type} of $T$ $\sigma_T$ is the spectral type of the associated
unitary operator $U_T$ on $L^2(\mu)$. For a probability preserving
transformation $S$, the \emph{restricted spectral type} is the
spectral type the unitary operator $U_S$ restricted to
$L^2$-functions with integral zero.

Our brief exposition here follows Section 2.5 of~\cite{aarobook}.

\subsection{Poisson processes and the Poisson suspension}\label{subsecpoissonprocesses}
For a standard $\sigma$-finite measure space $(X,\mathcal{B},\mu)$,
$(X^*,\mathcal{B}^*,\mu^*)$ denotes the
associated \emph{Poisson point process}, which we now describe. $X^*$
is the space of countable subsets of $X$. We will typically denote an
element of $X^*$ by $\omega$, $\omega_1$, $\omega_2$ and so on. The
$\sigma$-algebra $\mathcal{B}^*$ is generated by sets of the form 
%
\begin{equation}
\label{eqgensigmaalgebraBstar} \bigl[ |\omega\cap B\rrvert =n\bigr]:= \bigl\{\omega\in
X^* \dvtx |\omega\cap B| = n\bigr\}
\end{equation}
for $n \ge0$ and $B \in\mathcal{B}$.


The probability measure $\mu^*$ is
is uniquely defined by requiring that for any
pairwise disjoint $A_1,A_2,\ldots,A_n \in\mathcal{B}$,
if $\omega\in X^*$ is sampled according to $\mu^*$, then
$|\omega\cap A_i|$ are jointly independent random variables\vadjust{\goodbreak} 
individually distributed Poisson with expectation $\mu(A_i)$
%
\begin{equation}
\label{eqpoissondef} \mu^* \bigl(|\omega\cap A|=k \bigr)=e^{-\mu(A)}\frac{\mu(A)^k}{k!}.
\end{equation}

The underlaying measure $\mu^*$ is called the \emph{intensity} of the
Poisson process. We will assume that the measure $\mu$ has no atoms,
namely $\mu(\{x\})=0$ for any $x \in X$. This is a necessary and
sufficient condition to avoid multiplicity of points almost surely with
respect to $\mu^*$.

A Poisson point process can be defined on very general measure spaces,
under milder assumptions than ``standard.''
Details of the construction and general properties of Poisson processes
can be found, for instance, in \cite
{kingmanpoissonbook,kingmanpoissonprocessrevisted}.

To make various measurability statements in the following sections more
transparent, we assume the following technical condition:
There is a fixed sequence $\{\beta_n\}_{n=1}^\infty$ of countable
partitions of $X$ into $\mathcal{B}$-measurable sets,
such that $\beta_{n+1}$ refines $\beta_n$, with the additional
property that the mesh of these partitions goes to
$0$, namely,
\[
\lambda(\beta_n):=\sup\bigl\{ \mu(B) \dvtx B \in\beta_n
\bigr\} \to0 \qquad\mbox{as } n \to\infty.
\]
We assume that $\mathcal{B} = \bigvee_{n=1}^\infty\sigma(\beta_n)$ is
the $\sigma$-algebra generated by the union of these partitions. For
instance, if $(X,\mathcal{B},\mu)$ is the real line with Lebesgue
measure on the Borel sets, we can take $\beta_n$ to be the partition
into half-open intervals with endpoints on the lattice $\frac
{1}{2^n}\mathbb{Z}$.

The $\sigma$-algebra $\mathcal{B}^*$ can now be defined by
\[
\mathcal{B}^* = \bigvee_{n=1}^\infty
\beta_n^*,
\]
where $\beta_n^*$ is the $\sigma$-algebra generated by sets of the form
\eqref{eqgensigmaalgebraBstar} with $B \in\beta_n$ and $n \in\{
0,1,2,\ldots\}$. Different sequences $\beta_n$ with the above
properties will not change the completion with respect to $\mu^*$ of
the resulting $\sigma$-algebra $\mathcal{B}^*$.


The \emph{Poisson suspension} of a measure preserving map $T\dvtx X \to
X$, is the natural map obtained by applying $T$ on $X^*$. As in \cite
{roy2009}, we denote it by $T_*\dvtx X^* \to X^*$. This transformation is
formally defined by
\[
T_*(\omega)=\bigl\{T(x) \dvtx x \in\omega\bigr\}.
\]
$T_*$ is a probability-preserving transformation of $(X^*,\mathcal
{B}^*,\mu^*)$.


The following proposition relates the spectral measures of $T$ and
$T_*$~\cite{roy2009}:
%
\begin{prop}
\label{proppoissonspectral}
If $\sigma$ is the spectral-type of $T$,
the restricted spectral type of $T_*$ is given by
\[
\sigma_{T_*} = \sum_{n \ge1} \frac{1}{n!}
\sigma^{\otimes n}.
\]
\end{prop}

It is a classical result that a probability-preserving transformation
is ergodic if and only if
its restricted spectral type has no atom at $\lambda=1$, and is\vadjust{\goodbreak}
\emph{weakly mixing} if and only if its restricted spectral type has no
atoms in~$\mathbb{S}^1$ (this property is also equivalent to ergodicity of
$T\times T$).
It follows that $T_*$ is ergodic
if and only if $T_*$ is weakly mixing if and only if there are no
$T$-invariant sets of
finite measure in $\mathcal{B}^+$~\cite{roy2009}.

In the following sections we will use the map $\pi\dvtx X \times X^* \to
X^*$ given by
%
\begin{equation}
\label{eqpifactordef} \pi(x,\omega) = \{x\} \cup\omega.
\end{equation}

The map $\pi$ defined by \eqref{eqpifactordef}
is a measurable map from between the measure spaces $(X\times X^*,
\mathcal{B} \otimes\mathcal{B}^*)$ and
$(X^*,\mathcal{B}^*)$.
This is can be verified directly using the following equalities of sets:
\[
\pi^{-1} \bigl[ |\omega\cap A| = 0 \bigr] = (X\setminus A) \times\bigl[ |\omega\cap
A| = 0\bigr]
\]
and
\[
\pi^{-1} \bigl[ |\omega\cap A | = n \bigr] = \bigl((X \setminus A) \times\bigl[ |
\omega\cap A| = n\bigr] \bigr) \cup \bigl(A \times \bigl[ |\omega\cap A| \in\{n-1,n\}
\bigr] \bigr)
\]
for $A \in\mathcal{B}$ and $n \in\mathbb{N}$.

In fact, $\pi$ is a \emph{$\infty$-factor map} between the measure
preserving maps $T \times T_*$ and~$T_*$,
in the sense of Chapter $3$ of~\cite{aarobook}: This means that $\pi
\circ T_* = (T \times T_*) \circ\pi$ and for $A \in\mathcal{B}^*$
\[
\bigl(\mu\times\mu^*\bigr) \circ\pi^{-1} (A)= %
\cases{ 0, &\quad
$\mbox{ if } \mu^*(A)=0$,\vspace*{2pt}
\cr
\infty,& \quad$\mbox{otherwise.}$} %
\]
%
\section{Ergodicity of Poisson product for conservative transformations}
\label{secproofpoissonproductergodic}

We now provide a proof of Theorem~\ref{thmpoissonproductergodic}.
The argument we use is an adaptation of~\cite{aaronadkarni1987}.
To prove our result, we invoke the following condition for
ergodicity of Cartesian products, due to M. Keane:

%
\begin{theorem*}[(The ergodic multiplier theorem)]
Let $S$ be a probability preserving transformation and $T$ a
conservative, ergodic, nonsingular transformation.
$S \times T$ is ergodic if and only if $\sigma_S(e(T))=0$, where:
\begin{itemize}
\item{$\sigma_S$ is the restricted spectral type of $S$;}
\item{$e(T)$ is the group of $L_\infty$-eigenvalues of $T$.}
\end{itemize}
\end{theorem*}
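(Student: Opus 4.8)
The plan is to characterize non-ergodicity of $S\times T$ by a single Hilbert-space-valued functional equation and then extract the eigenvalue condition from the spectral theorem for $U_S$. Since ergodicity is a statement about invariant sets, $S\times T$ fails to be ergodic precisely when there is a bounded measurable $F:Y\times X\to\mathbb C$ that is not $(\nu\times\mu)$-a.e. constant and satisfies $F\circ(S\times T)=F$. First I would encode such an $F$ as a measurable map $\Phi:X\to L^2(\nu)$ via $\Phi(x)=F(\cdot,x)$; this lands in a ball of $L^2(\nu)$ since $\nu$ is a probability measure and $F$ is bounded. The invariance $F(Sy,Tx)=F(y,x)$ translates into the cocycle equation $\Phi\circ T=U_S^{*}\Phi$ $\mu$-a.e., using only that $U_S$ is unitary. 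Splitting off the $\nu$-mean, $\Phi(x)=a\cdot 1+\Phi_0(x)$ with $\Phi_0(x)\in L^2_0(\nu)$, the scalar $a(x)$ is forced to be $T$-invariant, hence constant by ergodicity of $T$, and $\Phi_0$ still solves $\Phi_0\circ T=U_S^{*}\Phi_0$; moreover $F$ non-constant forces $\Phi_0\not\equiv0$.

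For the implication $\sigma_S(e(T))=0\Rightarrow$ ergodic I would argue the contrapositive from such a $\Phi_0$. Because $U_S^{*}$ commutes with the spectral projections $E(\cdot)$ of $U_S$, the scalar spectral measure $\rho_x(B)=\langle E(B)\Phi_0(x),\Phi_0(x)\rangle$ satisfies $\rho_{Tx}=\rho_x$, so by ergodicity of $T$ it equals a fixed measure $\rho_0\ll\sigma_S$ with $\rho_0(\mathbb S^1)=\|\Phi_0(x)\|^2>0$. The heart of the matter is to show that $\rho_0$ is carried by $e(T)$. I would realize $U_S|_{L^2_0(\nu)}$ as multiplication by $\lambda$ on a direct integral $\int^{\oplus}\mathcal H_\lambda\,d\sigma_S(\lambda)$; then $\Phi_0(x)$ corresponds to a section $\lambda\mapsto\phi_x(\lambda)$ with $\|\phi_x(\lambda)\|^2=\tfrac{d\rho_0}{d\sigma_S}(\lambda)=:w(\lambda)$ independent of $x$, while the cocycle equation becomes $\phi_{Tx}(\lambda)=\bar\lambda\,\phi_x(\lambda)$. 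Fixing a base point $x_0$ and setting $g_\lambda(x)=\langle\phi_x(\lambda),\phi_{x_0}(\lambda)\rangle$ produces, for each $\lambda$ with $w(\lambda)>0$, a bounded function with $g_\lambda\circ T=\bar\lambda\,g_\lambda$ and $g_\lambda(x_0)=w(\lambda)>0$; hence $\bar\lambda$, and so $\lambda$, lies in the group $e(T)$. Thus $\{w>0\}\subseteq e(T)$ up to $\sigma_S$-null sets, so $\rho_0(e(T)^c)=0$; combined with $\rho_0\ll\sigma_S$, the assumption $\sigma_S(e(T))=0$ then forces $\rho_0=0$, a contradiction.

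For the converse $\sigma_S(e(T))>0\Rightarrow$ not ergodic I would reverse this construction. Choose $g_0\in L^2_0(\nu)$ whose spectral measure is equivalent to $\sigma_S$, set $g=E(e(T))g_0$, and work in its cyclic subspace $Z\cong L^2(e(T),\sigma_g)$, where $U_S$ acts as multiplication by $\lambda$ and $\sigma_g(e(T))>0$. For each $\lambda\in e(T)$ select a unit-modulus eigenfunction $f_\lambda\in L^\infty(\mu)$ with $f_\lambda\circ T=\lambda f_\lambda$, and define $\Phi_0(x)\in Z$ as the class of $\lambda\mapsto\overline{f_\lambda(x)}$; since $|f_\lambda(x)|=1$ this has constant norm, and one checks $\Phi_0\circ T=U_S^{*}\Phi_0$. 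Transporting back yields a measurable $F(y,x)=\Phi_0(x)(y)$ that is non-zero yet, having $\nu$-mean zero in $y$ for each $x$, is not a.e. constant. Since each slice $F(\cdot,x)$ is a non-zero function of $\nu$-mean zero, its real or imaginary part is mean-zero and not identically zero, hence sign-changing; the corresponding set $A\subseteq Y\times X$ is $(S\times T)$-invariant with $A$ and $A^c$ both of positive measure, so $S\times T$ is not ergodic. (When $\sigma_S$ has an atom at some $\lambda\in e(T)$ this simplifies to the product eigenfunction $F(y,x)=g(y)f_\lambda(x)$ with $U_S g=\lambda^{-1}g$.)

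The main obstacle, in both directions, is the measurable bookkeeping inside the spectral theorem: in the forward direction I must produce a jointly measurable section $(x,\lambda)\mapsto\phi_x(\lambda)$ of the direct integral and invoke Fubini to pick a base point $x_0$ for which $g_\lambda\not\equiv0$ for $\sigma_S$-a.e.\ $\lambda$ with $w(\lambda)>0$; in the converse I must choose the eigenfunctions $\lambda\mapsto f_\lambda$ measurably. The latter is a genuine measurable selection problem, but it is tractable because ergodicity of $T$ makes $f_\lambda$ unique up to a unit scalar, so after fixing a normalization the graph $\{(\lambda,f):f\circ T=\lambda f,\ |f|=1\}$ is Borel and a von Neumann--Jankov type selection applies. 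I would also record at the outset that conservativity of $T$ places $e(T)\subseteq\mathbb S^1$ and makes $S\times T$ conservative, legitimizing the reduction to bounded invariant functions, and note that for merely non-singular $T$ the same argument runs once $U_T$ is replaced by its Radon--Nikodym-weighted unitary, since the $L^\infty$-eigenvalue equation itself is unaffected.
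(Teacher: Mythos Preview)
The paper does not actually prove this theorem: it is stated as a known result and the reader is referred to section~2.7 of Aaronson's book \cite{aaro_book} for a proof. So there is no ``paper's own proof'' to compare against; your sketch is effectively a reconstruction of the standard argument found there.

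Your outline follows that standard route rather closely. Encoding a bounded $(S\times T)$-invariant function as a map $\Phi:X\to L^2_0(\nu)$ satisfying $\Phi\circ T=U_S^{*}\Phi$, observing that the spectral measure $\rho_x$ of $\Phi(x)$ is $T$-invariant and hence $\mu$-a.e.\ constant, and then showing via a direct-integral (or cyclic-subspace) model that the support of this common measure lies in $e(T)$ is exactly the architecture of Keane's proof as presented in Aaronson. The converse via a measurable choice of unimodular eigenfunctions $\lambda\mapsto f_\lambda$ and the vector $\Phi_0(x)=(\lambda\mapsto\overline{f_\lambda(x)})$ is likewise the textbook construction.

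The points you flag as obstacles are indeed the only real content beyond bookkeeping: a jointly measurable realization of the direct integral section, a Fubini argument to locate a good base point $x_0$, and a measurable selection of eigenfunctions (handled in Aaronson via the Borel structure on the eigenvalue group and the uniqueness-up-to-scalar of eigenfunctions for ergodic $T$). Two small cautions: in the forward direction you should be explicit that the equality $\|\phi_x(\lambda)\|^2=w(\lambda)$ holds only for $(\mu\times\sigma_S)$-a.e.\ $(x,\lambda)$, so producing a single $x_0$ that works simultaneously for $\sigma_S$-a.e.\ $\lambda$ genuinely needs Fubini; and in the converse, the von Neumann--Jankov selection requires knowing that $e(T)$ is a Borel subset of $\mathbb S^1$ and that the eigenfunction bundle is analytic, which is where the polish group structure on $e(T)$ discussed in the paper's preliminaries enters.
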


A proof of this result is provided, for instance, in Section $2.7$ of
\cite{aarobook}.

By Proposition~\ref{proppoissonspectral}, the restricted
spectral-type of the Poisson suspension $T_*$
is a linear combination of convolution powers of the spectral type of $T$.

We make use of the following basic lemma about convolution of
measures and equivalence of measure classes. A short proof is provided
here for the
sake of completeness:

\begin{lemma}\label{lemconvolutionrespectsmeasureclass}
Let $\mu_1$ and $\mu_2$ be Borel probability measures on
$\mathbb{S}^1$ with the same
null-sets. For any Borel probability measure $\nu$ on $\mathbb{S}^1$,
the measures $\mu_1 * \nu$ and $\mu_2 * \nu$ have the same
null-sets.
\end{lemma}

\begin{pf}
We will prove that $\mu_1 \ll\mu_2$ implies that $\mu_1 * \nu\ll\mu_2 * \nu$ which suffices by symmetry.

We assume $\mu_1 \ll\mu_2$, and show that for any $\varepsilon>0$, there
exists $\delta>0$ so that any set $A \in\mathcal{P}(\mathbb{S}^1)$
with $(\mu_1 * \nu)(A) \ge\varepsilon$ has $(\mu_2 * \nu)(A) \ge\delta$.

Fix $\varepsilon>0$ and choose any $A \in\mathcal{B}(\mathbb{S}^1)$ with
$(\mu_1 * \nu)(A) \ge\varepsilon$. It follows that
\[
\nu \biggl(\biggl\{ x \in\mathbb{S}^1 \dvtx \mu_1(A
\cdot x) \ge\frac{\varepsilon
}{2} \biggr\} \biggr) \ge\frac{\varepsilon}{2}.
\]

Since $\mu_1 \ll\mu_2$, there exists $\delta'>0$ so that
$\mu_1(B) \ge\frac{\varepsilon}{2}$ implies $\mu_2(B) \ge \delta'$.
Thus,
\[
\nu \bigl(\bigl\{ x \in\mathbb{S}^1 \dvtx \mu_2(A\cdot
x) \ge\delta' \bigr\} \bigr) \ge\frac{\varepsilon}{2}.
\]

It follows that
$(\mu_2 * \nu)(A) \ge\delta'\cdot\frac{\varepsilon}{2}$, which
establishes the claim with $\delta= \delta'\cdot\frac{\varepsilon}{2}$.
\end{pf}


From this we deduce the following lemma.

\begin{lemma}\label{lemeignvaluesactnonsingularly}
Let $T$ be a conservative, 
measure-preserving transformation.
For any $n \ge1$, the group $e(T)$ acts nonsingularly on 
$\sigma_{T}^{\otimes n}$, the $n$th convolution power of the restricted
spectral type of $T$.
\end{lemma}

\begin{pf}
Our claim is that
%
\begin{equation}
\label{eqeignonsingular} \forall t \in e(T) \qquad\sigma_{T}^{\otimes n}
\sim\delta_t * \sigma_{T}^{\otimes n},
\end{equation}
where $\delta_t$ denotes dirac measure at $t$, and $\sim$ denotes
equivalence of measure classes.
For $n=1$, a proof can be found in~\cite{aaronadkarni1987,hann79}.




Equation \eqref{eqeignonsingular} follows
for $n >1$
by induction using Lemma~\ref{lemconvolutionrespectsmeasureclass},
with $t \in e(T)$, $\sigma_T$ and $\delta_t * \sigma_T$
substituting for $\mu_1$ and $\mu_2$, respectively, and $\sigma_T^{\otimes(n-1)}$ substituting for $\nu$.
\end{pf}

\textit{Completing the proof of Theorem
\ref{thmpoissonproductergodic}}.

By the ergodic multiplier theorem above, proving ergodicity of
the Poisson-product amounts to proving $\sigma_{T_*}(e(T))=0$.
Since $\sigma_{T_*}=\sum_{n \ge1}\frac{1}{n!}\sigma_{T}^{\otimes
n}$, it is sufficient to prove that for all $n \ge1$,
%
\begin{equation}
\label{eqeignvaluesspectralnull} \sigma_{T}^{\otimes n}\bigl(e(T)
\bigr)=0.
\end{equation}
A proof that $\sigma_T(e(T))=0$ is provided in~\cite{hann79}; see also
\cite{aaronadkarni1987}. This is the case $n=1$ of equation \eqref
{eqeignvaluesspectralnull}. We also refer to the discussion in
Chapter $9$ of~\cite{nadkarnispectraldsbook}.

For convenience of the reader and in preparation for the discussion in
Section~\ref{secgroupactions}, we briefly recall the arguments
leading to this result:
Suppose the contrary, $\sigma_{T}(e(T)) >0$. Since $e(T)$ acts
nonsingularly on $\sigma_T$, it follow that
$\sigma_{T}\mid_{e(T)}$ is a quasi-invariant measure on
$e(T)$. Thus,
$e(T)$
can be furnished with\vadjust{\goodbreak} a locally-compact second-countable topology,
respecting the Borel structure inherited from~$\mathbb{S}^1$. Haar
measure on $e(T)$ must be is equivalent to $\sigma_{T}\mid_{e(T)}$.
With respect to this topology, we have that $e(T)$ is a locally compact group,
continuously embedded in $\mathbb{S}^{1}$, where the topological
embedding is also a group embedding.
In this situation, it follows as in~\cite{aaronadkarni1987} that
$e(T)$ is either discrete or $e(T)=\mathbb{S}^1$.
The possibility that $e(T)$ is discrete is ruled out since this would
imply $\sigma_{T}$ has atoms, which
means $T$ has $L^2(\mu)$ eigenfunctions. This is impossible since $T$
is an ergodic transformation preserving an infinite measure.
The alternative is that $e(T)=\mathbb{S}^1$. This is impossible since $e(T)$
is weak Dirichlet, thus must be a null set with respect to Haar measure
on $\mathbb{S}^1$~\cite{schmidtspectra1982}.

To prove the equality in \eqref{eqeignvaluesspectralnull} for $n
>1$, note that the convolution power of an atom-free measure is itself
atom-free and that by Lemma~\ref{lemeignvaluesactnonsingularly}
above $e(T)$ also acts nonsingularly on $\sigma_T^{\otimes n}$. The
result now follows using the same arguments outlined above for the case $n=1$.





%
This completes the proof of Theorem~\ref{thmpoissonproductergodic}.

\section{Nonexistence of equivariant thinning} 
\label{secthinnings}

Here is a formalization of the notion of a (deterministic) \emph
{thinning}. This is a
$\mathcal{B}^*$-measurable map $\Psi\dvtx X^* \to X^*$, satisfying
\[
\mu^*\bigl(\bigl[\bigl|\Psi(\omega) \cap B\bigr| \le|\omega\cap B|\bigr]\bigr)=1 \qquad\forall B
\in \mathcal{B}.
\]

This essentially means that $\Psi$ is a measurable map on the space
$X^*$ of countable sets of $X$,
for which almost-surely $\Psi(\omega) \subset\omega$.

A \emph{Poisson thinning} satisfies the extra condition that $\mu^*\circ\Psi^{-1} =
(\theta\mu)^*$ for some $\theta\in(0,1)$.
By $(\theta\mu)^*$ we mean the measure on $(X^*,\mathcal{B}^*)$ which
corresponds to a Poisson process with intensity given by $\theta\cdot
\mu$. In other words, the law of the countable set $\Psi(\omega)$ is
that of a lower-intensity Poisson process.

Given a measure preserving transformation $T\dvtx X \to X$, a thinning
$\Psi$ is called \emph{$T$-equivariant} if $\Psi\circ T_* = T_* \circ
\Psi$.
A thinning $\Psi$ is \emph{trivial} if
\[
\mu^*\bigl( \bigl[\Psi(\omega)= \varnothing\bigr]\bigr)=1 \quad\mbox{or}\quad \mu^*
\bigl( \bigl[\Psi(\omega)= \omega\bigr]\bigr)=1.
\]

\begin{prop}\label{propnopoissonthinning}
Let $T$ be a group-action by measure preserving transformations.
If  $T \times T_*$ is ergodic,
there does not exist a nontrivial $T$-equivariant thinning.
\end{prop}
\begin{pf}
Suppose by contradiction that $\Psi$ is a nontrivial $T$-equivariant thinning.
Consider the set
%
\begin{equation}
A = \bigl\{ (x,\omega) \in X \times X^* \dvtx x \in\Psi\bigl(\omega\cup\{x\}
\bigr)\bigr\}.
\end{equation}
%

Measurability of the set $A$ is verified by the following:
\[
A = \bigcap_{n=1}^\infty\bigcup
_{B \in\beta_n} \bigl(B \times X^* \bigr)\cap \bigl((\Psi\circ
\pi)^{-1}\bigl[|\omega\cap B| >0\bigr] \bigr) \mod\mu \times\mu^*,
\]
where $\{\beta_n\}_{n=1}^\infty$ is a ``decreasing net'' of countable
partitions, as in Section~\ref{secprelim}.\vadjust{\goodbreak}

Since $\Psi$ is $T$-equivariant, the set A is a $T \times T_*$
invariant set. By ergodicity of $T \times T_*$, either $(\mu\times\mu^*)(A)=0$ or $(\mu\times\mu^*)(A^c)=0$.

Intuitively, $A$ is the subset of $X\times X^*$ where applying the
thinning $\Psi$ on the union of the ``indistinguishable points'' with
the ``distinguished point'' does not delete the distinguished point. We
will complete the proof by showing that this
implies that the thinning $\Psi$ is trivial.

For $j \in\mathbb{N}$, define $\pi_{(j)}\dvtx \overbrace{X\times\cdots
\times X}^j \times X^* \to X^*$ by
\[
\pi(x_1,\ldots,x_j,\omega)=\bigcup
_{k=1}^j\{x_k\}\cup\omega.
\]

$\pi_{(j)}$ is $\mathcal{B}^{\otimes j} \otimes\mathcal{B}^*$-measurable.
This follows from measurability of the map $\pi$ given by \eqref
{eqpifactordef}, which coincides with $\pi_{(1)}$.

For any $B \in\mathcal{B}$ with $0<\mu(B)< \infty$, and $j \in\mathbb
{N}$, we consider the following probability measures:
\begin{longlist}[(iii)]
\item[(i)]
\[
\mu^*_{B,j}(\cdot):= \mu^* \bigl(\cdot \mid\bigl[(\omega\cap B) =j
\bigr]\bigr).
\]
This is a probability measure on $(X^*,\mathcal{B}^*)$ corresponding to
a Poisson process with intensity $\mu$, conditioned to have exactly $j$
points in the set $B$,\vspace*{4pt}
\item[(ii)]
\[
\hat{\mu}_{B,j}(\cdot):=\frac{ (\mu\times\mu^*)\mid_{B \times
[(\omega\cap B) =j]} }{\mu(B)\cdot\mu^*([\omega\cap B] =j)}(\cdot).
\]

$\hat{\mu}_{B,j}$ is a probability measure on $X\times X^*$ given by
the product of a random point in $B$, distributed according to $\mu\mid_B$ and an independent Poisson process with intensity $\mu$,
conditioned to have exactly $j$ points inside the set $B$,\vspace*{4pt}
\item[(iii)]
\[
\tilde{\mu}_{B,j}(\cdot):= \frac{\overbrace{\mu\mid_B\times\cdots
\times\mu\mid_B}^j\times(\mu\mid_{B^c})^* }{\mu(B)^j}(\cdot).
\]
This is the probability on $(X^j \times X^*,\mathcal{B}^{\otimes j}
\otimes\mathcal{B}^*)$ which corresponds to $j$ independent random
points identically distributed according to $\mu\mid_B$ and an
independent Poisson process of intensity $\mu\mid_{B^c}$.
\end{longlist}

From the properties of the Poisson process, it directly follows that
the probability measures defined above are related as follows:

\begin{equation}
\label{eqpoissoncondiid} \hat{\mu}_{B,j}\circ\pi^{-1}= \tilde{
\mu}_{B,j+1} \circ\pi_{(j)}^{-1}= \mu^*_{B,j+1}
\end{equation}
and
%
\begin{equation}
\label{eqpoissoncondiid2} \hat{\mu}_{B,j}= \tilde{\mu}_{B,j+1}
\circ\pi_{[2,j]}^{-1},\vadjust{\goodbreak}
\end{equation}
where $\pi_{[2,j]}\dvtx \overbrace{X\times\cdots\times X}^j \times X^* \to
X\times X^*$ is given by
\[
\pi_{[2,j]}(x_1,\ldots,x_j,\omega)=
\Biggl(x_1,\bigcup_{k=2}^j
\{x_k\} \cup\omega \Biggr).
\]


In particular, it follows that $\pi_{(j)}$ is a nonsingular map for all
$j \ge1$, in the sense that
the inverse image of a $\mu^*$-null set is always $\overbrace{\mu
\times\cdots\mu}^j \times\mu^*$-null.





Assuming $\Psi$ is not a trivial thinning implies that
there exist $B \in\mathcal{B}$ with $0<\mu(B)<\infty$ 
so that
\[
\mu^* \bigl( 0 < \bigl| \Psi(\omega) \cap B\bigr| < | \omega\cap B| \bigr)>0.
\]
It follows that for some $j >1$,
%
\begin{equation}
\label{eqprobdelete} \mu^*_{B,j} \biggl( 0 < \frac{| \Psi(\omega) \cap B|}{ | \omega\cap B|} < 1
\biggr)>0.
\end{equation}

Now by \eqref{eqpoissoncondiid} and \eqref{eqpoissoncondiid2},
using symmetry of $\tilde{\mu}_{B,j}$ with respect to the variables
$(x_1,\ldots,x_j)$, it follows that the probability
$\hat{\mu}_{B,j} ( x \in\Psi(\pi(x,\omega)) )$ is equal to
the expectation of $\frac{| \Psi(\omega) \cap B|}{ | \omega\cap B|}$
under $\mu^*_{B,j}$. By \eqref{eqprobdelete}\vspace*{1pt} this expectation must be
strictly positive and smaller than one. This contradicts triviality of
the set~$A$: Either $(\mu\times\mu^*)(A)=0$ in which case $\hat{\mu
}_{B,j} ( x \in\Psi(\pi(x,\omega)) )=0$ or $(\mu\times\mu^*)(A^c)=0$ in which case $\hat{\mu}_{B,j} ( x \in\Psi(\pi(x,\omega
)) )=1$.
\end{pf}

\section{Nonexistence of equivariant allocation and matching}\label
{secallocationmatching}

The aim of this section is to establish 
the nonexistence of $T$-equivariant Poisson allocation and Poisson
matching, under an ergodicity assumption of a certain extension of $T$.
Combined with Theorem~\ref{thmpoissonproductergodic}, this will
establish the last part of Theorem~\ref{thmnopoissonthin}.



We begin with an intermediate result about measure-preserving systems.
Consider
a measurable function $ \Phi\dvtx X \to L^1(\mu)$, sending $x \in X$ to $\Phi_x \in L^1(\mu)$, 
which is $T$-equivariant in the sense that $\Phi_{Tx} \circ T= \Phi_x
$. Such a function $\Phi$ can be interpreted as a $T$-equivariant
``mass allocation'' scheme.
For instance, on $X=\mathbb{R}^d$ with Lebesgue measure, $\Phi_x(y) =
1_{B_1(x)}(y)$ and $\Phi_x(y) = \exp(-\|x-y\|)$ both define
isometry-equivariant ``mass allocations.'' The later can be considered
a ``fractional allocation,'' in the sense that it obtains values in the
interval $(0,1)$. Nonexistence of $T$-equivariant Poisson allocation
and Poisson matching will be a consequence of the following:

\begin{prop}\label{propnomassallocation}
$\!\!\!\!\!$Let $T$ be a measure-preserving group action on $(X,\mathcal{B},\mu)$.
If $T \times T_*$ is ergodic, and $\mu(X)=\infty$, 
any $T$-equivariant
measurable function
$ \Phi\dvtx X \to L^1(\mu)$
must be equal to $0$ $\mu$-a.e.
\end{prop}

\begin{pf}
Suppose $\Phi\dvtx X \to L^1(\mu)$ satisfies $\Phi_{Tx} \circ T= \Phi_x$.
Note that ergodicity of $T$ implies that $\|\Phi_x\|_{L^1(\mu)} $ is
constant $\mu$-a.e, as this is a\vadjust{\goodbreak} $T$-invariant function.
Consider the function $F\dvtx X \times X^* \to\mathbb{R}$ given by
\[
F(x,\omega) = \sum_{y \in\omega} \bigl|\Phi_x(y)\bigr|.
\]
%
We verify that $F$ indeed coincides with a $\mathcal{B}\otimes\mathcal
{B}^*$-measurable function on a set of full $\mu\times\mu^*$-measure.

Indeed,
\[
\Phi_x = \sum_{B \in\beta_1} \sum
_{ y \in\omega\cap B} \bigl| \Phi_x(y)\bigr|,
\]
by Martingale convergence,
\[
\sum_{ y \in\omega\cap B} \bigl| \Phi_x(y)\bigr| =
\lim_{n \to\infty} E_{\mu
^*} \biggl(\sum_{ y \in\omega\cap B}
\bigl| \Phi_x(y)\bigr| \mid\beta_n^* \biggr)
\]
for $\mu\times\mu^*$-almost-every $(x,\omega)$.
For $B \in\beta_1$ and $n \ge1$ we have
\[
E_{\mu^*} \biggl(\sum_{ y \in\omega\cap B} \bigl|
\Phi_x(y)\bigr| \mid\beta_n^* \biggr) = \sum
_{D \in\beta_n \cap B}E_{\mu^*} \biggl(\sum
_{y \in
(\omega\cap D)}\bigl|\Phi_x(y)\bigr| \biggr),
\]
and the right-hand side is clearly $\mathcal{B} \times\beta_n^*$-measurable.







Let
\[
\tilde{F}(x):= \int\bigl|F(x,\omega)\bigr| \,d\mu^*(\omega) = \int\sum
_{y \in
\omega} \bigl|\Phi_x(y)\bigr| \,d\mu^*(\omega),
\]
and it follows from the definition of $\mu^*$ that
$\tilde{F}= \| \Phi_x\|_{L^1(\mu)}$. Thus, by ergodicity of $T$,
$\tilde{F}$
is equal to a nonzero (finite) constant $\mu$-almost everywhere. In
particular, $F$ is finite $\mu\times\mu^*$-almost everywhere.

Observe that $F$ is $T\times T_*$-invariant, so by ergodicity of $T
\times T_*$ must be constant $\mu\times\mu^*$-a.e. On the other hand,
for any $\varepsilon>0$ and $M >0$, we have $F(x,\omega) > M$ whenever
$(x,\omega) \in X \times X^*$ satisfy $|\omega\cap\{y \in X \dvtx   |\Phi_x(y)| > \varepsilon\}| > \frac{M}{\varepsilon}$.
%
From the definition of the Poisson process, it thus follows that
\[
\bigl(\mu\times\mu^*\bigr) \bigl( [ F \ge M] \bigr) \ge\mu\bigl(\bigl\{ x \in X \dvtx
\Vert \Phi_x\Vert_{L^1(\mu)} \ge\varepsilon\bigr\}\bigr) \cdot
\frac{\varepsilon^{{M}/{\varepsilon
}}}{M!}\exp \biggl( -\frac{M}{\varepsilon} \biggr).
\]

Because the right-hand side is strictly positive for any $M >0$,
whenever $\varepsilon>0$ is sufficiently small,
it follows that $F$ is not essentially bounded, which contradicts~$F$
being almost-everywhere constant.
\end{pf}

Together with Theorem~\ref{thmpoissonproductergodic}, Proposition
\ref{propnomassallocation}, immediately gives the following
corollary, which does not seem to involve Poisson processes at all:

\begin{corollary}
Let $T\dvtx X\to X$ be a conservative and ergodic measure preserving
transformation of $(X,\mathcal{B},\mu)$ with $\mu(X)=\infty$.
Any measurable function
$ \Phi\dvtx X \to L^1(\mu)$
satisfying $\Phi_{Tx}\circ T = \Phi_x $ must be equal to $0$ $\mu$-a.e.\vadjust{\goodbreak}
\end{corollary}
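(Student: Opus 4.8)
The plan is to derive the corollary as a direct consequence of Theorem \ref{thm:poisson_product_ergodic} together with Proposition \ref{prop:no_mass_allocation}, treating a single transformation $T$ as the special case of a group action generated by $T$. First I would observe that the hypotheses of the corollary—$T$ conservative, ergodic, measure preserving, with $\mu(X) = \infty$—are precisely what is needed to invoke Theorem \ref{thm:poisson_product_ergodic}: since $T$ is conservative and $\mu(X) = \infty$, that theorem tells us the Poisson-product $T \times T_*$ is ergodic if and only if $T$ is ergodic. As $T$ is assumed ergodic, we conclude that $T \times T_*$ is ergodic.

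Next I would apply Proposition \ref{prop:no_mass_allocation} with the measure-preserving group action taken to be the $\mathbb{Z}$-action (or $\mathbb{N}$-action, depending on invertibility conventions) generated by $T$. The hypotheses of that proposition are exactly that $T \times T_*$ is ergodic and $\mu(X) = \infty$, both of which we have just verified. The proposition then yields that any measurable $T$-equivariant $\Phi: X \to L^1(\mu)$, i.e. one satisfying $\Phi_{Tx} \circ T = \Phi_x$, is $0$ $\mu$-a.e., which is precisely the statement of the corollary.

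There is essentially no obstacle here; the corollary is a bookkeeping combination of the two prior results, and the only point requiring a word of care is matching the equivariance conventions ($\Phi_{Tx}\circ T = \Phi_x$ in both statements) and confirming that a single transformation is admissible as a degenerate ``group action'' in the sense used in Proposition \ref{prop:no_mass_allocation}. If one wishes, one can also remark on why the statement ``does not seem to involve Poisson processes'': the Poisson-product appears only internally, as the auxiliary system whose ergodicity is used to run the argument of Proposition \ref{prop:no_mass_allocation}.

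In summary, the proof is: (1) conservativity, ergodicity and infinite measure give ergodicity of $T \times T_*$ via Theorem \ref{thm:poisson_product_ergodic}; (2) feed this into Proposition \ref{prop:no_mass_allocation}, viewing $T$ as generating a measure-preserving action, to conclude $\Phi = 0$ a.e. I expect the entire proof to be two or three sentences long.
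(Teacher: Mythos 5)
Your proposal is correct and is exactly the paper's argument: the corollary is stated there as an immediate combination of Theorem \ref{thm:poisson_product_ergodic} (giving ergodicity of $T\times T_*$ from conservativity, ergodicity, and $\mu(X)=\infty$) with Proposition \ref{prop:no_mass_allocation} applied to the action generated by $T$. No gap to report.
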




We now turn to define and establish a nonexistence result for
equivariant Poisson allocations:

By a \emph{Poisson allocation rule} we mean a $\mathcal{B}^* \otimes
\mathcal{B}$-measurable map $\Upsilon\dvtx X \times X^* \to L^1(\mu)$
satisfying the following properties:
\begin{longlist}[(A1)]
\item[(A1)]{\emph{nonnegativity:} $\Upsilon_{(x,\omega)}(y) \ge 0$;}
\item[(A2)]{\emph{partition of unity:} $ \sum_{x \in\omega}(y) \Upsilon_{(x,\omega)} = 1$ $\mu^*$-a.e.;}
\item[(A3)]{$\Upsilon_{(x,\omega)} \equiv0$ if $x \notin\omega$.}
\end{longlist}
%

If $x \in\omega$, we think of $\Upsilon_{(x,\omega)}$ as the ``the
cell allocated to $x$.'' Properties (A1) and (A2) above guarantee
that $\Upsilon$ essentially takes values in the interval $[0,1]$.
The three above properties together express the statement that
$\Upsilon_{(\cdot,\omega)}$ corresponds to a partition of $X$ up to a
null set between the points in $\omega$, which assigns each $x \in
\omega$ finite mass. For a ``proper'' allocation, we would require that
$\Phi_{(x,\omega)}$ only takes values in $\{0,1\}$, but this extra
requirement is not necessary in order to prove our result.

For it is often useful to consider a wider class of Poisson allocation
rules, where $\Upsilon_{(x,\omega)}$ is undefined for a null set of
$(x,\omega)$'s, and $\Upsilon$ is only measurable with respect to the
$\mu\times\mu^*$-completion of the $\sigma$-algebra $\mathcal{B}^*
\otimes\mathcal{B}$. However, conditions~(A2) and~(A3) above apply
to $\mu\times\mu^*$-null sets, so we need to be careful and restate
them as follows:
\begin{longlist}[(A3$'$)]
\item[(A1)]{\emph{nonnegativity:} $\Upsilon_{(x,\omega)}(y) \ge 0$;}
\item[(A2$'$)]{\emph{partition of unity:} $ \int_X \Upsilon_{(x,\omega)}
\,d\mu(x) = 1$ $\mu^*$-a.e.;}
\item[(A3$'$)]{$\int_A \Upsilon_{(x,\omega)}\,d\mu(x) \equiv0$ $\mu^*$-a.e
on $\{ \omega\in X^* \dvtx   \omega\cap A = \varnothing\}$ whenever $A \in
\mathcal{B}$.}
\end{longlist}
%

A poisson allocation $\Upsilon$ is $T$-equivariant if $\Upsilon_{(Tx,T_*\omega)}\circ T = \Upsilon_{(x,\omega)} $.







\begin{prop}
\label{propnopoissonallocation}
Let $T$ be a group-action by measure preserving transformations, and denote
$S:=T \times T_*$.
If $S \times S_*$ is ergodic,
there does not exist a $T$-equivariant Poisson-allocation.
\end{prop}

\begin{pf}
Given a Poisson allocation $\Upsilon\dvtx X \times X^* \to L^1(\mu)$, we
will define a $T \times T_*$-equivariant
function $\Phi\dvtx X\times X^* \to L^1(\mu\times\mu^*),$ 
which by ergodicity of $S=T \times T_*$ will contradict Proposition \ref
{propnomassallocation}.
This is given by
\[
\Phi_{(x,\omega)}(y,\omega_2)= \Upsilon_{(x,\omega\cup\{x\})}(y).
\]


It follows directly that
\[
\| \Phi_{(x,\omega)}\|_{L^1(\mu\times\mu^*)} = \| \Upsilon_{(x,\omega
\cup\{x\})}
\|_{L^1(\mu)},
\]
%
which is positive and finite $\mu\times\mu^*$-a.e.

Measurability of $\Phi$ follows from the measurability assumptions on
$\Upsilon$ and from measurability of the map $(x,\omega) \to\{x\} \cup
\omega$.
\end{pf}

We now consider the existence of equivariant Poisson matching
schemes:

Given a pair of independent Poisson processes realizations a
(deterministic) \emph{Poisson matching}
assigns a perfect matching (or bijection) between the points of the two
realizations, almost surely.
To formalize this we define a Poisson matching as a
measurable-function $\Psi\dvtx X^* \times X^* \to(X \times X)^*$, 
satisfying the following:

\begin{longlist}[(M1)]
\item[(M1)]
\[
\mu^* \bigl( \bigl\{ \omega_2\in X^* \dvtx \bigl|\Psi(
\omega_1,\omega_2) \cap (B_1 \times
B_2) \bigr| \le\min\bigl\{|\omega_1 \cap B_1 |,|
\omega_2 \cap B_2 |\bigr\} \bigr\} \bigr) =1
\]
for $\mu^*$-a.e $\omega_1$ and all $B_1,B_2 \in\mathcal{B}$;\vspace*{2pt}
\item[(M2)]
\[
\mu^* \bigl( \bigl\{\omega_2 \in X^* \dvtx\bigl |\Psi(
\omega_1,\omega_2) \cap (B_1 \times X)\bigr | = |
\omega_1 \cap B_1 | \bigr\} \bigr) =1
\]
for $\mu^*$-a.e $\omega_1$ and all $B_1 \in\mathcal{B}$;\vspace*{2pt}
\item[(M3)]
\[
\mu^* \bigl( \bigl\{\omega_1 \in X^* \dvtx \bigl|\Psi(
\omega_1,\omega_2) \cap(X \times B_2)\bigr | = |
\omega_2 \cap B_2 | \bigr\} \bigr) =1
\]
for $\mu^*$-a.e $\omega_2$ and all $B_2 \in\mathcal{B}$.
\end{longlist}

\begin{prop}\label{propnopoissonmatching}
Under the assumptions of Proposition~\ref{propnopoissonallocation},
there does not exist a nontrivial $T$-equivariant Poisson matching.
\end{prop}

\begin{pf}
Suppose $\Psi$ is a $T$-equivariant Poisson matching.
We will define a ``fractional'' $T$-equivariant Poisson allocation
$\Upsilon\dvtx X \times X^* \to L^1(\mu)$, contradicting Proposition \ref
{propnopoissonallocation}.

The (implicit) definition of $\Upsilon$ is given by
\begin{equation}
\label{eqmatchingfromallocation} \int_A
\Upsilon_{(x,\omega_1)}(y)\,d\mu(y) = \mu^* \bigl(\bigl\{ \omega_2
\dvtx \bigl|\Psi(\omega_1,\omega_2)\cap\bigl(\{x\} \times A
\bigr)\bigr|>0 \bigr\} \bigr)
\end{equation}
for all $A \in\mathcal{B}$, $\omega_1 \in X^*$ and $x \in X$.

In other words, if $x\in\omega_1$, $\Upsilon_{(x,\omega_1)}$ is the
density with respect to Lebesgue measure of the
conditional distribution of the \emph{partner} of $x$ under the
matching $\Psi$, given~$\omega_1$.
This defines $\Upsilon$ up to a null set.

It follows from the properties of $\Psi$ that $\Upsilon$ satisfies the
conditions (A1), (A2$'$) and (A3$'$) above.

Thus, $\Upsilon$ is indeed a Poisson allocation.
Because $\Psi$ is a $T$-equivariant matching, it follows directly that
$\Upsilon$ is a $T$-equivariant allocation.
\end{pf}

To complete the proof of the last part of Theorem \ref
{thmnopoissonthin}, we note that if $T$ is a conservative and
ergodic measure-preserving transformation, $S=T \times T_*$ is also
conservative and ergodic by Theorem~\ref{thmpoissonproductergodic},
and so $S \times S_*$ is also ergodic, again by Theorem \ref
{thmpoissonproductergodic}.

\section{The leftmost position transformation}
\label{secFROL}
In this section $X=\mathbb{R}_+$
is the set of positive real numbers, $\mathcal{B}$ is the Borel $\sigma
$-algebra on $X$ and $\mu$
is Lebesgue measure on the positive real numbers. $T\dvtx X \to X$ is an
arbitrary conservative, ergodic, Lebesgue-measure-preserving
map of the positive real numbers.\vadjust{\goodbreak}

In order to have a concrete example for such transformation $T$ in
hand, the reader can consider the unsigned version of Boole's
transformation, given by $T(x)=|x-\frac{1}{x}|$.
We define the following function:
%
\begin{equation}
\label{eqt1} t_1\dvtx X^* \to X\qquad \mbox{by } t_1(\omega)=
\inf\omega.
\end{equation}
The map $t_1$ is well defined on a set of full $\mu^*$-measure, namely
whenever $\omega\ne\varnothing$.
Note that
$t_1(\omega)$ is the leftmost point of $\omega$ whenever $\omega$ is a
discrete countable subset of $\mathbb{R}_+$.
The map $t_1$ is $\mathcal{B}^*$-measurable since
\[
t_1^{-1}(a,b) = \bigl\{\omega\in X^* \dvtx \omega
\cap(0,a]=\varnothing\mbox{ and } \omega\cap(a,b) \ne\varnothing\bigr\}.
\]
From this, it also follows directly that
\[
\mu^*\circ t_1^{-1}(a,b) = e^{-\mu(0,a)}
\bigl(1-e^{-\mu(a,b)} \bigr) = e^{-a}-e^{-b}.
\]

In particular it follows that $ \mu^*\circ t^{-1} \ll\mu$.

Define the \emph{leftmost return time} $\kappa\dvtx X^* \to\mathbb{N}\cup\{
+\infty\}$ by
%
\begin{equation}
\label{eqkappa} \kappa(\omega) = \inf\bigl\{k \ge1 \dvtx t_1
\bigl(T_*^k(\omega)\bigr)=T^k\bigl(t_1(\omega
)\bigr)\bigr\}.
\end{equation}
$\mu^*$-almost surely, $\kappa(\omega)$ is the smallest positive
number of iterations of $T_*$ which must be applied to $\omega$ in
order for the leftmost point to return to the leftmost location. A
priori, $\kappa_T$ is could be infinite. Nevertheless, we will soon
show that when~$T$ is conservative and measure preserving,
$\kappa$ is finite $\mu^*$-almost surely. Finally, the
\emph{leftmost position transformation} associated with $T$,
$T_*^\kappa\dvtx \omega\to\omega$, is defined by
\[
T_*^\kappa(\omega)\dvtx =T_*^{\kappa(\omega)}(\omega).
\]

This is the map of
$X_*$ obtained by reapplying $T_*$ till once again there are no
points to the left of the point which was originally leftmost.

The reminder of this section relates the leftmost transformation
associated with~$T$ with the Poisson-product $T \times T_*$.

Let
%
\begin{equation}
\label{eqX0} X_0 = \bigl\{(x,\omega) \in X\times X^* \dvtx \omega
\cap(0,x] = \varnothing\bigr\}.
\end{equation}

The set $X_0$ is simply the subset of $X \times X^*$ in which the
``distinguished point'' is strictly to the left of any
``undistinguished point.''
The formula below verifies 
measurability of $X_0$:
\[
X_0 = \bigcap_{n \in\mathbb{N}} \bigcup
_{q \in\mathbb{Q}} \biggl(\biggl(q-\frac{1}{n},q+\frac{1}{n}
\biggr)\times \biggl\{\omega\in X^* \dvtx \omega\cap \biggl(0,q+\frac{2}{n}
\biggr) = \varnothing\biggr\} \biggr) \mod\mu\times\mu^*.
\]

\begin{prop}\label{propleftmostisinducedproduct}
Let $T\dvtx \mathbb{R}_+\to\mathbb{R}_+$ be conservative and
Lebesgue-meas\-ure-preserving. Then the leftmost position
transformation associated with $T$ is well defined\vadjust{\goodbreak} and is isomorphic
to the induced map of the Poisson product on the set $X_0$ defined by
equation \eqref{eqX0},
\[
\bigl(X^*,\mathcal{B}^*,\mu_*,T_*^\kappa\bigr) \cong
\bigl(X_0,\mathcal{B}_0, \mu_0,(T\times
T_*)_{X_0} \bigr),
\]
where $\mu_0=(\mu\times\mu_*)\mid_{X_0}$ is the restriction of the
measure product $\mu\times\mu_*$ to the set $X_0$, and
$\mathcal{B}_0 =  (\mathcal{B}\otimes\mathcal{B}^* )\cap
X_0$ is the restriction of the $\sigma$-algebra on the product space
to subset of $X_0$.

In particular, 
$\mu_0(X_0)=1$,
so $(X_0,\mathcal{B}_0,\mu_0)$ is a probability space.
\end{prop}

\begin{pf}
Consider the map $\pi_0\dvtx X_0 \to X^*$ which is the restriction to $X_0$
of the map $\pi(x,\omega) =
\{x\}\cup\omega$ described in Section~\ref{subsecpoissonprocesses} above.

For a nonempty, discrete $\omega\in X^*$ we have
\[
\pi_0^{-1}(\omega)=\bigl(t_1(\omega),\omega
\setminus t_1(\omega)\bigr).
\]
Thus $\pi_0$ is invertible on a set of full $\mu^*$-measure in $X^*$.

As $T$ is conservative and $T_*$ is a probability preserving
transformation, the Poisson product $T\times T_*$ is also
conservative. We will show below that $\mu\times\mu^*(X_0)>0$.
Therefore, the return time $\varphi_{X_0}$ is finite almost
everywhere on $X_0$.

Since $\kappa\circ\pi_0 = \pi_0 \circ\varphi_{X_0}$, it follows that
$\kappa$ is finite $\mu^*$-a.e.

We also have
\[
\pi_0\bigl(T^nx,T_*^n\omega
\bigr)=T_*^n\bigl(\pi_0(x,\omega)\bigr)
\]
whenever $(x,\omega)$ and $(T^nx,T_*^n\omega)$ are in $X_0$.
Thus,
\[
\pi_0 \circ(T \times T_*)_{X_0} = T_*^\kappa\circ
\pi_0.
\]

It remains to check that $\pi_0^{-1}\mu^*=\mu_0$.
It is sufficient to
verify that $\mu^*(A)=\mu_0(\pi_0^{-1}(A))$ for sets $A \in
\mathcal{B}^*$ of the form

%
\[
A= \bigcap_{k=1}^N\bigl[ |\omega\cap
A_k | = n_k\bigr],
\]
where $A_i=(a_{i-1},a_{i}]$,
$0=a_0 <a_1 < a_2 <\cdots<a_N$ and $n_k \ge0$ for $k=1,\ldots N$.

Given the definition of $\mu^*$, this amounts to an exercise in
elementary calculus.
By
definition of $\mu^*$,
\[
\mu^*(A)=\prod_{k=1}^N \frac{\mu(A_k)^{n_k}}{n_k!}
\exp \bigl(-\mu(A_k) \bigr),
\]
which simplifies to
%
\begin{equation}
\label{eqmuA} \mu^*(A) =\exp(-a_N)\prod
_{k=1}^{N}\frac{(a_{k}-a_{k-1})^{n_k}}{n_k!}.
\end{equation}

Assuming the $n_k$'s are not all zero, let $k$ the smallest index for
which $n_k > 0$. We have
\begin{eqnarray*}
\pi_0^{-1}(A) &= &\bigcap_{j \ne k}
\bigl(X \times\bigl[|\omega\cap A_j| = n_j\bigr] \bigr)\\
&&{}\cap\bigcup
_{ x \in A_{k}}\{x\}\times \bigl(\bigl[\bigl|\omega\cap[a_{k-1},x
)\bigr|=0\bigr] \cap\bigl [\bigl|\omega\cap[x,a_{k})\bigr|=n_k-1\bigr] \bigr).
\end{eqnarray*}

Thus
\[
\mu_0\bigl(\Phi^{-1}(A)\bigr)=T_0 \int
_{A_k}\exp\bigl(-(x-a_{k-1})\bigr)\exp
\bigl(-(a_{k}-x)\bigr)\frac
{(a_{k}-x)^{n_{k}-1}}{(n_{k}-1)!}\,dx,
\]
where
\[
T_0=\prod_{j\ne
k}\frac{(a_{j}-a_{j-1})^{n_j}}{n_j!}\exp
(a_{j}-a_{j-1} ).
\]

Integrating this rational function of a single variable, we see that
the last expression is equal to the expression
on right-hand side of \eqref{eqmuA}.

In particular, it follows that $\mu_0(X_0)=1$.

%
It remains to check the case that $n_k=0$ for all $k=1,\ldots, N$: In
this case then $A= [ \omega\cap(0,a_N]=0 ]$ and
\[
\pi_0^{-1}(A)= \bigl\{(x,\omega) \in X_0
\dvtx x > a_n \bigr\}.
\]
Thus
\[
\mu_0\bigl(\pi_0^{-1}(A)\bigr)=\int
_{[a_N,\infty)} e^{-\mu[x,\infty)}\,d\mu(x)=\exp(-a_N),
\]
which is equal to
$\mu^*(A)$.
\end{pf}

\begin{corollary}\label{corleftmostergodic}
$\!\!\!$Let $T\dvtx \mathbb{R}_+\to\mathbb{R}_+$ be a conservative and ergodic
Lebesgue-measure-preserving transformation. Then the leftmost position
transformation $T_*^\kappa\dvtx (\mathbb{R}_+)^* \to(\mathbb{R}_+)$ is an
ergodic probability preserving transformation.
\end{corollary}

\begin{pf}
Let $T$ be as above.
By Proposition~\ref{propleftmostisinducedproduct}, $T_*^\kappa$ is
isomorphic to the map obtained by inducing the Poisson product $T
\times T_*$ onto the set $X_0$. It is well known that inducing a
conservative and ergodic transformation on a set of positive measure
results in an ergodic transformation. By Theorem~\ref
{thmpoissonproductergodic}, $T\times T_*$ is indeed ergodic.
\end{pf}

It would be interesting to establish other ergodic properties of
$T^\kappa$. For example, what conditions on $T$ are required for
$T^\kappa_*$ to be weakly mixing?

\section{Poisson-products and measure-preserving group actions}\label
{secgroupactions}
The purpose of this section is to discuss counterparts of our pervious
results on ergodicity of Poisson products,
and various equivariant operations in the context of a group\vadjust{\goodbreak}
of measure preserving transformations.
Some motivating examples for this are groups of
$\mathbb{R}^n$-isometries, which naturally act on $\mathbb{R}^n$
preserving Lebesgue measure.

Briefly recall the basic setup:
We fix a topological group $\mathbb{G}$ and a $\sigma$-finite measure
space $(X,\mathcal{B},\mu)$. A
measure-preserving $\mathbb{G}$-action $T$ on the $\sigma$-finite
measure space $(X,\mathcal{B},\mu)$
is a representation $g \mapsto T_g \in\operatorname{Aut}(X,\mathcal{B},\mu)$
of $\mathbb{G}$
into the measure preserving automorphisms of $(X,\mathcal{B},\mu)$.

A $\mathbb{G}$-action $T$ is \emph{ergodic} if for some $A \in\mathcal
{B}$, $\mu(T_g A \setminus A)=0$
for all $g \in\mathbb{G}$ then either $\mu(A)=0$ or $\mu(X \setminus A)=0$.

Any measure preserving $\mathbb{G}$-action $T$ induces an action $T_*$
on the Poisson process by probability preserving transformations \cite
{roypoissonpinsker}. The Poisson-product $\mathbb{G}$-action $T
\times T_*$ is thus defined the same way as in the case of a single
transformation.

The proofs of Propositions~\ref{propnopoissonthinning}, \ref
{propnomassallocation},~\ref{propnopoissonallocation} and \ref
{propnopoissonmatching} above are still valid in this generality.

Let us recall the definition of a conservative $\mathbb{G}$-action:
Say $W \in\mathcal{B}$ is a \emph{wandering set} with resect to the
action $T$ of a locally-compact group $\mathbb{G}$ if
$\mu(T(g,W)\cap W)=0$ for all $g$ in the complement of some compact
$K \subset\mathbb{G}$. Call a $\mathbb{G}$-action
\emph{conservative} if there are no nontrivial wandering sets.

If in the statement of Theorem~\ref{thmpoissonproductergodic} we
let $T$ be a conservative ergodic $\mathbb{G}$-action for a group
other than $\mathbb{Z}$, ergodicity of $T\times T_*$ may fail. This
can happen even for conservative and ergodic $\mathbb{Z}^2$-actions,
as we demonstrate in the example below:

Let $a,b \in\mathbb{R}\setminus\{0\}$ with $\frac{a}{b} \notin\mathbb
{Q}$.
Define a $\mathbb{Z}^2$-action $T$ on $\mathbb{R}$ by
\[
T_{(m,n)}(x)=x+am+bn \qquad\mbox{for } (m,n) \in\mathbb{Z}^2.
\]

It is a simple exercise to show that the $\mathbb{Z}^2$-action above
is both conservative and ergodic. Nevertheless, it is easy to see
that $T \times T_*$ is not ergodic, for instance, by noting that
\[
\bigl\{ (x,\omega) \in\mathbb{R}\times\mathbb{R}^* \dvtx (x+1,x-1)\cap \omega=
\varnothing\bigr\}
\]
is a nontrivial $T\times T_*$-invariant set.
Since this action $T$ consists of translations, as noted in the
\hyperref[sec1]{Introduction}, there do exist $T$-equivariant Poisson allocations,
Poisson matchings and Poisson thinning.

Although the example above demonstrates Theorem \ref
{thmpoissonproductergodic} does not generalize, for abelian group
actions most components of the proof given in Section~\ref
{secproofpoissonproductergodic} remain intact. Our next goal is to
explain this, and point out where the proof of Theorem~\ref
{thmpoissonproductergodic} breaks down for the example above:

Let $\mathbb{G}$ be a locally compact \emph{abelian} group, and let
$\widehat{\mathbb{G}}$ denote its dual.
Generalizing the discussion in Section~\ref{secprelim}, the $L^{\infty}$-\emph{spectra}
of a $\mathbb{G}$-action $T$, denoted $\operatorname{Sp}(T)$,
is the set of homomorphisms $\chi\dvtx \mathbb{G} \to\mathbb{C}^*$
such that $f(T_gx)=\chi(g)f(x)$ for
some nonzero $f \in L^{\infty}(X,\mu)$.
In case $\mathbb{G}=Z$, the spectra is simply the group $L^{\infty
}$-eigenvalues. As in the case
$\mathbb{G}=\mathbb{Z}$ discussed earlier, the $L^\infty$-spectra is a
weak-Dirichlet set in
$\widehat{\mathbb{G}}$~\cite{schmidtspectra1982}.

The $L^{2}$-\emph{spectral type} of $T$ is an equivalence class of
Borel measures $\sigma_T$
on $\widehat{\mathbb{G}}$ for any nonzero $f \in L^2(\mu)$ $\sigma_f
\ll\sigma_T$, where the measure $\sigma_f$ is given by
%
\[
\hat\sigma_f(g) = \int f\bigl(T_g(x)\bigr)
\overline{f(x)}\,d\mu(x).
\]
The spectral type of $\sigma_T$ is the minimal equivalence class of
measures on $\widehat{\mathbb{G}}$ with respect to which all the $\sigma_f$'s are absolutely continuous.

With these definitions, Keane's ergodic multiplier theorem above
generalizes as follows:
The product of an ergodic measure preserving $\mathbb{G}$-action $T$
and a probability preserving $\mathbb{G}$-action $S$
is ergodic if and only if $\operatorname{Sp}(T)$ is null with respect to the
restricted spectral type of $\sigma_T$. The discussion in the end of
Section~\ref{secproofpoissonproductergodic} following \cite
{aaronadkarni1987,schmidtspectra1982} still shows that in this case
$\operatorname{Sp}(T)$ must be a locally compact group continuously which embeds
continuously in $\widehat{\mathbb{G}}$. However, when $\mathbb{G} \ne
\mathbb{Z}$, this does not imply that $\operatorname{Sp}(T)$ is either discrete or
equal to $\widehat{\mathbb{G}}$.

Getting back to the example of the $\mathbb{Z}^2$-action $T$ above, we
note that for any $\tau\in
\mathbb{R}$, the function $f_\tau\in L^{\infty}(\mathbb{R})$
defined by
\[
f_\tau(x) = \exp(i \tau x),
\]
is an $L^\infty$ eigenfunction of $T$, since it satisfies
\[
f_\tau\bigl(T_{(m,n)}(x)\bigr) = \exp\bigl(i \tau(x+am+bn)
\bigr)= \chi_{(ta,tb)}(m,n)\exp (i\tau x),
\]
where $\chi_{(a,b)}(m,n)=\exp(i am+ bn)$. The map $t \to
\chi_{(ta,tb)}$ is a continuous group embedding of $\mathbb{R}$ in
$\operatorname{Sp}(T) \subsetneq\widehat{\mathbb{Z}^2}$.

\section*{Acknowledgments}
Thanks to Emmanual Roy for inspiring
conversations and in particular for suggesting the ``leftmost position
transformation'' and asking about its
ergodicity. This work is indebted to Jon Aaronson for numerous
contributions, in particular for recalling the paper
\cite{aaronadkarni1987}, which contains key points of the main
result. To Omer Angel and Ori Gurel-Gurevich, thanks for helpful discussions
about equivariant operations on Poisson processes.

%

%

\printaddresses

\end{document}